\newtheorem{theorem}{Theorem}[section]
\newtheorem{definition}[theorem]{Definition}
\newtheorem{lemma}[theorem]{Lemma}
\newtheorem{proposition}[theorem]{Proposition}
\begin{document}

\title{Janelidze's Categorical Galois Theory as a step in the Joyal and Tierney result}
\author{Christopher Townsend}
\maketitle
\begin{abstract}
We show that a trivial case of Janelidze's categorical Galois theorem can be used as a key step in the proof of Joyal and Tierney's result on the representation of Grothendieck toposes as localic groupoids. We also show that this trivial case can be used to prove the general categorical Galois theorem by using a rather pleasing technical result about sliced adjunctions.
\end{abstract}

\section{Introduction}

The motivation for this paper came from trying to understand how Janelidze's categorical account of Galois Theory, which we here take to mean Theorem 5.1.24 of \cite{GaloisTheories}, relates to Joyal and Tierney's result on the representation of bounded toposes via localic groupoids (\cite{JoyT}). Although Chapter 7 of \cite{GaloisTheories} already provides such a relationship it is at the expense of moving to 2-categorical language and requires a re-statement of the categorical Galois Theorem itself. Further the author felt that his own Lemma 3.1, deployed in \cite{towgroth} as a step towards proving the Joyal and Tierney result, must be a known bit of abstract nonsense, given that it is a general result characterising the connected components adjunction of any groupoid in any cartesian category.

One of our conclusions is that one way round of Lemma 3.1 is an example of Janelidze's Galois Theorem, in fact a trivial one. Therefore the connection between categorical Galois Theory and Joyal and Tierney's result is easily made. What the author has noted along the way is that the categorical Galois theorem follows from the trivial case. The trivial case is also appealing because it provides not only criteria for when we have a connected components adjunction of a groupoid but is a characterization of the situation.

\subsection{Technical summary}

We now begin a more technical summary of our topic. Recall that Theorem 5.1.24 of \cite{GaloisTheories} asserts, in summary, that given an adjunction $\mathcal{D}\dashv \mathcal{C}: \mathcal{A} \pile{\rTo^{\mathcal{D}} \\ \lTo_{\mathcal{C}}} \mathcal{P}$ between two categories, both with pullbacks, together with some subcategories of the domain and codomain to which the adjunction restricts for every slice, then, given an effective descent morphism relative to one of these restricted slices of $\mathcal{A}$, there is a further subcategory of a slice of $\mathcal{A}$, referred to as those morphisms split by the effective descent morphism, which is equivalent to a full subcategory of the category of $\mathbb{G}$-objects for some groupoid $\mathbb{G}$ internal to $\mathcal{P}$. We label this the Relative Categorical Galois Theorem and note that the subcategories involved are determined by so called `admissible' classes of arrows.

If we forget about the subcategories/admissible arrows in the above formulation things get simpler. We label the resulting Categorical Galois Theorem as `Absolute'. It is the assertion that given an adjunction $\mathcal{D} \dashv \mathcal{C}:\mathcal{A} \pile{\rTo \\ \lTo} \mathcal{P}$ and $\sigma: S \rTo R$ an effective descent morphism of $\mathcal{A}$, then the full subcategory of $\mathcal{A}/R$ consisting of $\sigma$-split morphisms is equivalent to $[Gal[\sigma],\mathcal{P}]$ for some groupoid $Gal[\sigma]$ in $\mathcal{P}$ provided the counit of the adjunction, sliced at $S$, is an isomorphism and any object in the image of the right adjoint of the sliced adjunction, once precomposed with $\sigma$, is $\sigma$-split. Here a morphism with codomain $R$ is $\sigma$-split provided its pullback along $\sigma$ is fixed by the sliced adjunction. 

Let us now assume the context of the Absolute Categorical Galois Theorem but take $R=1$ (so that $\mathcal{A}$ and $\mathcal{P}$ must both be cartesian) and assume the sliced adjunction $\mathcal{A}/S \pile{\rTo \\ \lTo} \mathcal{P}/\mathcal{D}S$ is an equivalence. We will show below that it follows that every morphism to $1$ is $\sigma$-split and the counit of the adjunction sliced at $S$ is of course then always an isomorphism. Therefore by the Absolute Categorical Galois theorem, $\mathcal{A}$ is equivalent to $[Gal[S],\mathcal{P}]$ for some groupoid $Gal[S]$. We call this the Trivial Categorical Galois Theorem. It can in fact be proved directly:
\begin{proposition}\label{Trivial}
Given an adjunction $\mathcal{D} \dashv \mathcal{C}:\mathcal{A} \pile{\rTo \\ \lTo} \mathcal{P}$ between cartesian categories and an object $S$ of $\mathcal{A}$ such that $!:S \rTo 1$ is an effective descent morphism and $\mathcal{D}_S: \mathcal{A}/S \rTo \mathcal{P}/{\mathcal{D}S}$ is an equivalence. Then there exists a groupoid $\mathbb{G}$ internal to $\mathcal{P}$ such that $\mathcal{A}$ is equivalent to $[ \mathbb{G}, \mathcal{P}]$. 
\end{proposition}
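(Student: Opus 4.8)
The plan is to exhibit $\mathcal{A}$ as a category of descent data along $!:S\to 1$, transport that description across the sliced equivalence, and recognise the result as a category of internal groupoid objects.

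\emph{Constructing the groupoid.} Since $\mathcal{A}$ is cartesian the kernel pair of $!:S\to 1$ is the pair of projections $S\times S\rightrightarrows S$, which, together with the diagonal $\Delta:S\to S\times S$ and the evident maps out of $S\times S\times S$, is the indiscrete groupoid $\mathbb{S}$ on $S$ internal to $\mathcal{A}$ (composition $(x,y,z)\mapsto(x,z)$, inverse $(x,y)\mapsto(y,x)$). I would define $\mathbb{G}$ by applying $\mathcal{D}$ objectwise, so that its object of objects is $\mathcal{D}S$, its object of arrows is $\mathcal{D}(S\times S)$, and its structure maps are the $\mathcal{D}$-images of those of $\mathbb{S}$. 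The one point that needs an argument is that $\mathcal{D}$ sends the pullbacks defining $\mathbb{S}$ (for example $(S\times S)\times_S(S\times S)\cong S\times S\times S$) to the corresponding pullbacks in $\mathcal{P}$. This is precisely what the hypothesis supplies: as an equivalence, $\mathcal{D}_S$ preserves all limits of $\mathcal{A}/S$, in particular binary products, and a binary product in $\mathcal{A}/S$ is exactly a pullback over $S$ in $\mathcal{A}$; reading this off gives a natural isomorphism $\mathcal{D}(A\times_S B)\cong\mathcal{D}A\times_{\mathcal{D}S}\mathcal{D}B$. Hence $\mathcal{D}$ preserves every pullback over $S$, and in particular all the pullbacks appearing in the nerve of $\mathbb{S}$, so that $\mathbb{G}=\mathcal{D}\mathbb{S}$ is a groupoid in $\mathcal{P}$.

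\emph{Descent and transport.} Because $!:S\to 1$ is an effective descent morphism and $\mathcal{A}\simeq\mathcal{A}/1$, the pullback functor identifies $\mathcal{A}$ with the category $\mathrm{Des}(S\to 1)$ of descent data. Unwinding the definitions, a descent datum is exactly an object of $\mathcal{A}/S$ carrying an action of $\mathbb{S}$: the gluing isomorphism over $S\times S$ is the action map, and the cocycle condition over $S\times S\times S$ is associativity. Thus $\mathcal{A}\simeq[\mathbb{S},\mathcal{A}]$. I would then slice $\mathcal{D}_S$ further at the objects $(S\times S\to S)$ and $(S\times S\times S\to S)$; since slicing an equivalence at an object and its image is again an equivalence and $(\mathcal{A}/S)/(X\to S)=\mathcal{A}/X$, this produces equivalences $\mathcal{A}/S_n\simeq\mathcal{P}/\mathcal{D}S_n$ for $n=0,1,2$. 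The pullback-preservation just established shows that these level-wise equivalences commute with the reindexing functors along the face maps, so they assemble into an equivalence $[\mathbb{S},\mathcal{A}]\simeq[\mathbb{G},\mathcal{P}]$. Composing these equivalences yields $\mathcal{A}\simeq[\mathbb{G},\mathcal{P}]$, as required.

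The step I expect to be the main obstacle is the last one: checking that the individual sliced equivalences are genuinely compatible with the simplicial face and degeneracy maps, so that they glue to an equivalence between the two categories of actions rather than merely matching objects level by level. Once preservation of pullbacks over $S$ is in hand this reduces to verifying that the relevant naturality squares commute, but it is the place where the care is needed.
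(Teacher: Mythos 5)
Your proposal is correct and takes essentially the same approach as the paper: both define $\mathbb{G}$ as the $\mathcal{D}$-image of the pair groupoid $S\times S\rightrightarrows S$, justify that it is a groupoid in $\mathcal{P}$ by observing that the sliced equivalence forces $\mathcal{D}$ to preserve pullbacks over $S$, and then use effective descent of $!:S\to 1$ to identify $\mathcal{A}$ with descent data (equivalently, algebras of the monad $S^*\Sigma_S$ on $\mathcal{A}/S$), transported across $\mathcal{A}/S\simeq\mathcal{P}/\mathcal{D}S$ to $\mathbb{G}$-objects. The only difference is presentational: the paper transports the monad across the equivalence, which absorbs the level-wise compatibility checks you flag as the main obstacle into the standard fact that an equivalence carries a monad and its algebras to a monad and its algebras.
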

\begin{proof}
Take $\mathbb{G}$ to be the image of the groupoid $(S \times S \pile{\rTo^{\pi_2} \\ \rTo_{\pi_1}}S, ...)$ under $\mathcal{D}$. This image determines a groupoid in $\mathcal{P}$ because the only pullbacks that need to be preserved are over $S$, which are preserved by assumption that the sliced adjunction is an equivalence. The category of algebras of the monad induced by the pullback adjunction $\Sigma_S \dashv S^* : \mathcal{A}/S \pile{\rTo \\ \lTo} \mathcal{A}$ is then equivalent, via $\mathcal{A}/S \simeq  \mathcal{P}/\mathcal{D}S$, to the category of $\mathbb{G}$-objects by construction. (Recall that a morphism is of effective descent if, by definition, its corresponding pullback functor is monadic.) 
\end{proof}

Our Trivial Categorical Galois Theorem is one round of Lemma 3.1 of \cite{towgroth} which also goes the other way round: if $\mathcal{D}\dashv \mathcal{C}$ is the connected components adjunction of a groupoid then there exists $\sigma: S \rTo 1$ of effective descent such that the adjunction, sliced at $S$, is an equivalence. This provides the connection of the title of the paper since Lemma 3.1 is used as a step in proving the Joyal and Tierney result. 

To provide a bit more colour on this connection we note that if $p: \mathcal{E} \rTo \mathcal{S}$ is a geometric morphism then it gives rise to an adjunction $\Sigma_p \dashv p^*: \mathbf{Loc}_{\mathcal{E}} \pile{ \rTo \\ \lTo} \mathbf{Loc}_{\mathcal{S}}$ between categories of locales over the domain and codomain toposes (the right adjoint is pullback in the category of toposes, treating a locale as a localic geometric morphism). If, further, $p$ is bounded with bound $B$ then the unique locale map $[\mathbb{N} \rTo B] \rTo 1$ from the locale of surjections from the naturals onto $B$, is an effective descent morphism and it can be shown, essentially from the definition of being a bound, that the sliced adjunction $\Sigma_{p_{[\mathbb{N} \rightarrow B]}} \dashv p^*_{[\mathbb{N} \rightarrow B]}$ is an equivalence. Hence the category of locales over $\mathcal{E}$ is equivalent to the category of $\mathbb{G}$-locales for some localic groupoid $\mathbb{G}$ over our base topos $\mathcal{S}$; the Joyal and Tierney result follows by restricting to discrete locales over $\mathcal{E}$. 

Aside from this connection to the Joyal and Tierney result, the other contribution of this paper is to show that the Trivial Categorical Galois Theorem can be used to prove the Absolute Categorical Galois Theorem.

\subsection{A comment on motivation}
We do not focus on examples of categorical Galois Theorem in action here and so do not demonstrate in this paper how our abstract categorical work relates to more familiar results about, say, fields, Galois groups or the fundamental group of a topological space. The book \cite{GaloisTheories}, which has proved invaluable to the author in his work on this topic, sets out in detail plenty of such examples, providing a proper motivation for the study of categorical Galois Theory.

\section{Slicing adjunctions}

In this section we recall some basic facts about sliced categories and adjunction between them. 

If $f: X \rTo Y$ is a morphism of a category $\mathcal{C}$ with pullbacks then there is an adjunction $\Sigma_f \dashv f^*: \mathcal{C}/X  \pile{ \rTo \\ \lTo} \mathcal{C}/Y$, where $f^*$ is pullback. This is the \emph{pullback adjunction} of $f$. We will write $Z_g$ for a typical object of $\mathcal{C}/X$; i.e. for $g:Z \rTo X$. So, for example, $\Sigma_f(Z_g) = Z_{fg}$. We write $Z_X$ for the projection $\pi_1 : X \times Z \rTo X$, an object of $\mathcal{C}/X$, and $\Sigma_X \dashv X^*$ for the pullback adjunction $\mathcal{C}/X \pile{ \rTo \\ \lTo} \mathcal{C}$ of $!: X \rTo 1$. An adjunction of this form is known as a \emph{slice}. All pullback adjunctions can be seen to be slices because for any $f: X \rTo Y$,  $\mathcal{C}/X$ is isomorphic to $(\mathcal{C}/Y)/X_f$. We observe that a morphism of $\mathcal{C}/X$ which is split in $\mathcal{C}$ is necessarily a regular monomorphism. To prove this observation say $n:Y_f \rTo Z_g$ is split in $\mathcal{C}$ by $k:Z \rTo Y$ (i.e. $kn=Id_Y$), then it is readily checked that $n$ is the equalizer of $Z_g \rTo^{(g,k)} Y_X \rTo^{Id_X \times n}Z_X$ and $Z_g \rTo^{(g,Id_Z)}Z_X$.

Any adjunction $L \dashv R: \mathcal{D} \pile{ \rTo^L \\ \lTo_R} \mathcal{C}$ can be sliced at any object $X$ of $\mathcal{C}$. The sliced adjunction $L_X \dashv R_X: \mathcal{D}/RX \pile{\rTo \\ \lTo } \mathcal{C}/X$ is given by $L_X(V_g) =$ `the adjoint transpose of $g$' and $R_X(Z_h)=RZ_{Rh}$. If we further assume that $\mathcal{D}$ has pullbacks then the adjunction can also be sliced at any object $W$ of $\mathcal{D}$. The sliced adjunction is written $L_W \dashv R_W: \mathcal{D}/W \pile{\rTo \\ \lTo } \mathcal{C}/LW$ and is given by $L_W(V_g) =LV_{Lg}$ and $R_W(Z_g)=$ `the pullback of $Rg$ along $\eta_W$', where $\eta$ is the unit of $L \dashv R$. If $\mathcal{C}$ also has pullbacks then these adjunctions relate nicely to one another: $L_W \dashv R_W$ is the composition of $\Sigma_{\eta_W} \dashv \eta_W^*$ followed by $L_{LW} \dashv R_{LW}$ and $L_X \dashv R_X$ is the composition of $L_{RX} \dashv R_{RX}$ followed by $\Sigma_{\epsilon_X} \dashv \epsilon_X^*$, where $\epsilon$ is the counit of $L \dashv R $.

Sliced adjunctions interact well with pullback adjunctions. The left adjoints of the following square of adjunctions clearly commute for any $g: W \rTo V$, a morphism of $\mathcal{D}$:
\begin{diagram}
\mathcal{D}/W & \pile{\rTo^{L_W} \\ \lTo_{R_W}} & \mathcal{C}/LW\\
\dTo^{\Sigma_g}  \uTo_{g^*} &        & \dTo^{\Sigma_{Lg}} \uTo_{(Lg)^*} \\
\mathcal{D}/V & \pile{\rTo^{L_V} \\ \lTo_{R_V}} & \mathcal{C}/LV \\
\end{diagram}
Therefore the right adjoints must also commute up to isomorphism.

If $L \dashv R: \mathcal{D} \pile{ \rTo \\ \lTo} \mathcal{C}$ is an adjunction between two categories such that $\mathcal{D}$ is cartesian and $\mathcal{C}$ has pullbacks, then $L_1 \dashv R_1: \mathcal{D}/1 \pile{ \rTo \\ \lTo} \mathcal{C}/L1$ is an adjunction between cartesian categories. Say for an object $W$ of $\mathcal{D}$, $!:W \rTo 1$ is an effective descent morphism and $L_W \dashv R_W$ is an equivalence of categories. Then by our introductory remarks on the direct proof of the Trivial Categorical Galois Theorem we know that there exists an internal groupoid $\mathbb{G}$ in $\mathcal{C}/L1$ such that $\mathcal{D}/1 \simeq [\mathbb{G}, \mathcal{C}/L1]$. Now $\mathbb{G}$ can be also be considered to be internal to $\mathcal{C}$ since pullbacks in $\mathcal{C}/L1$ are created in $\mathcal{C}$; we write $Gal[W]$ for $\mathbb{G}$ considered as internal to $\mathcal{C}$. As the groupoid $\mathbb{G}$ is constructed over $LW$, its $\mathbb{G}$-objects are over $LW$ from which it is clear that they are always over $L1$. From this it is clear that $[\mathbb{G},\mathcal{C}/L1]$ and $[Gal[W],\mathcal{C}]$ are the same category and so $\mathcal{D} \simeq [Gal[W], \mathcal{C}]$. This reasoning will be used below in our proof of the Absolute Categorical Galois Theorem.

We end this section with the following pleasing technical categorical lemma which will be a key step in our proof of the Absolute Categorical Galois Theorem to follow. The lemma shows that in certain circumstances checking that the unit of a sliced adjunction is an isomorphism when evaluated at a morphism, only requires knowledge of the domain of the morphism.    
\begin{lemma}\label{technical}
Let $L \dashv R: \mathcal{D} \pile{ \rTo \\ \lTo} \mathcal{C}$ be an adjunction between two categories with $\mathcal{D}$ cartesian. Let $W$ be an object of $\mathcal{D}$ with the property that the counit of $L \dashv R$, sliced at $W$, is an isomorphism (so that $L_W R_W \cong Id_{\mathcal{C}/LW}$). Assume for every object $X_{\phi}$ of $\mathcal{C}/LW$ that the unit of the adjunction sliced at $W$ is an isomorphism at $W^* \Sigma_W R_W (X_{\phi})$; i.e. $\eta^W_{W^* \Sigma_W R_W (X_{\phi})}$ is an isomorphism. Then for any object $V_f$ of $\mathcal{D}/W$, $\eta^W_{W^*V}$ is an isomorphism iff $\eta^W_{V_f}$ is an isomorphism.   
\end{lemma}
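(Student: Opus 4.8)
The plan is to identify the objects at which the sliced unit $\eta^W$ is an isomorphism with the essential image of $R_W$, and then to feed the hypothesis into the canonical equalizer presentation of $V_f$ supplied by the observation of Section 2. Since the sliced counit is assumed to be an isomorphism, $R_W$ is fully faithful, so the full subcategory $\mathcal{F} \subseteq \mathcal{D}/W$ of objects at which $\eta^W$ is an isomorphism is exactly the essential image of $R_W$; concretely $Z_g \in \mathcal{F}$ iff $Z_g \cong R_W L_W(Z_g)$. As the reflective subcategory cut out by $L_W \dashv R_W$, this $\mathcal{F}$ is closed under whatever limits exist in $\mathcal{D}/W$ --- in particular under equalizers, which $\mathcal{D}/W$ has because $\mathcal{D}$ is cartesian --- and membership of $\mathcal{F}$ is invariant under isomorphism. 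The one consequence of the hypothesis I would extract at the outset, and which drives both implications, is this: if $Z_g \in \mathcal{F}$ then $W^*\Sigma_W(Z_g) \in \mathcal{F}$. Indeed $Z_g \cong R_W(X_\phi)$ for $X_\phi := L_W(Z_g)$, whence $W^*\Sigma_W(Z_g) \cong W^*\Sigma_W R_W(X_\phi)$, and the right-hand object lies in $\mathcal{F}$ by assumption.

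With this in hand the forward implication is immediate: if $V_f \in \mathcal{F}$ then, applying the consequence just noted to $Z_g = V_f$ and using $\Sigma_W(V_f)=V$, we obtain $W^*V = W^*\Sigma_W(V_f) \in \mathcal{F}$, i.e. $\eta^W_{W^*V}$ is an isomorphism. For the reverse implication I would invoke the Section 2 observation for the graph map $(f, Id_V): V_f \rTo W^*V$, which is split in $\mathcal{D}$ by the second projection and is therefore a regular monomorphism in $\mathcal{D}/W$; explicitly it exhibits $V_f$ as the equalizer of a parallel pair $W^*V \rightrightarrows W^*\Sigma_W(W^*V)$. Assuming now $W^*V \in \mathcal{F}$, the same consequence applied to $Z_g = W^*V$ shows that the codomain $W^*\Sigma_W(W^*V)$ also lies in $\mathcal{F}$. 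Thus $V_f$ is a limit (an equalizer) of a diagram in $\mathcal{F}$, and closure of $\mathcal{F}$ under equalizers forces $V_f \in \mathcal{F}$, i.e. $\eta^W_{V_f}$ is an isomorphism.

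The substantive step --- and the one I would handle most carefully --- is the reverse implication, specifically the realisation that the codomain $W^*\Sigma_W(W^*V)$ of the canonical equalizer is itself fixed. This is exactly the point at which the hypothesis on the objects $W^*\Sigma_W R_W(X_\phi)$ becomes indispensable: without it there is no reason for the objects bounding $V_f$ to lie in $\mathcal{F}$, and the reflective-closure argument never gets started. I would also verify, as a routine check against the formulas $(g,k)$, $Id_X \times n$ and $(g, Id_Z)$ of the Section 2 observation, that the two maps of the parallel pair really are morphisms of $\mathcal{D}/W$ with the asserted fixed codomain, but I anticipate no difficulty there.
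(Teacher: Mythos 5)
Your proof is correct, and your forward implication is exactly the paper's own; but for the reverse implication you take a genuinely different route. The paper applies the Section 2 observation to the unit $\eta^W_{V_f}$ itself: from the invertibility of $\eta^W_{W^*V}$ it constructs an explicit retraction of $(f,\eta_V):V \to W\times_{RLW}RLV$ in $\mathcal{D}$, so that $\eta^W_{V_f}$ is exhibited as the equalizer of a pair $a,b: R_WL_WV_f \rightrightarrows W^*\Sigma_WR_WL_WV_f$, and it then proves $a=b$ outright: $L_W\eta^W_{V_f}$ is invertible (sliced counit iso plus the triangular identities), so $L_Wa=L_Wb$, so the transposes $\eta^W a$ and $\eta^W b$ agree at $W^*\Sigma_WR_WL_WV_f$, where $\eta^W$ is invertible by hypothesis; an equalizer of an equal pair is an isomorphism. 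You instead apply the observation to the graph $(f,Id_V):V_f \to W^*V$, which is split by $\pi_2$ with no hypothesis needed at all, and you close the argument with the general fact that $\mathcal{F}$, being the essential image of the fully faithful right adjoint $R_W$, is a full replete reflective subcategory of $\mathcal{D}/W$ and hence closed under equalizers; the hypothesis enters only to place the two bounding objects $W^*V$ and $W^*\Sigma_W(W^*V)$ in $\mathcal{F}$. Your route buys conceptual economy: the hypothesis is channelled entirely through your ``key consequence'' (closure of $\mathcal{F}$ under $W^*\Sigma_W$), and no naturality or adjoint-transpose computations appear. The paper's route buys self-containedness: it needs nothing beyond the triangular identities and the observation, whereas yours rests on the standard facts that an invertible sliced counit makes $R_W$ fully faithful, that the essential image of $R_W$ then coincides with the objects fixed by $\eta^W$, and that such a subcategory is closed under whatever limits exist in the ambient slice; the one step you should spell out is the uniqueness of factorizations through $\eta^W$ into objects of $\mathcal{F}$ (it follows because $L_W\eta^W$ is invertible, again by the counit assumption and the triangular identities), since the limit-closure fact you invoke depends on it.
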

\begin{proof}
Say $\eta^W_{V_f}$ is an isomorphism. Then $V_f \cong R_W L_W V_f$. Therefore $W^*V \cong W^*\Sigma_W R_W X_{\phi}$ for some $X_{\phi}$ in $\mathcal{C}/LW$ and so $\eta^W_{W^*V}$ is an isomorphism. 

In the other direction say $\eta^W_{W^*V}$ is an isomorphism. Then $(\pi_1,\eta_{W \times V}):W \times V \rTo W \times_{RLW} RL(W \times V)$ has an inverse, which must be of the form $(\pi_1,l):W \times_{RLW} RL(W \times V) \rTo W \times V)$ for some $l:W \times_{RLW} RL(W \times V) \rTo V$. It is then easy to verify by naturality of $\eta$ that
\begin{eqnarray*}
W \times_{RLW} RLV \rTo^{Id_W \times RL(f,Id_V)} W \times_{RLW} RL(W \times V) \rTo^l V
\end{eqnarray*}
is a split (in $\mathcal{C}$) for $V \rTo^{(f, \eta_V)} W \times_{RLW} RLV$; i.e. we have that the unit $\eta^W_{V_f}$ is split in $\mathcal{D}$. Therefore, by the earlier observation, there is an equalizer diagram 
\begin{eqnarray*}
V_f \rTo^{\eta^W_{V_f}} R_W L_W V_f \pile{ \rTo^a \\ \rTo_b} W^*\Sigma_W R_W L_W V_f
\end{eqnarray*}
for two morphisms $a$ and $b$. But $L_W \eta^W_{V_f}$ must be an isomorphism because the counit, sliced at $W$, is assumed to be an isomorphism (use the triangular identities). Therefore $L_W a = L_W b$ and so they must have the same adjoint transpose; i.e. $\eta^W_{W^*\Sigma_W R_W L_W V_f}a = \eta^W_{W^*\Sigma_W R_W L_W V_f}b$. We also know that $\eta^W_{W^*\Sigma_W R_W L_W V_f}$ is an isomorphism by assumption and so $a=b$, which shows that $\eta^W_{V_f}$ is an isomorphism as required.     
\end{proof}

\section{Absolute Categorical Galois Theorem}

\begin{definition}
Given an adjunction $\mathcal{D} \dashv \mathcal{C}: \mathcal{A} \pile{\rTo \\ \lTo} \mathcal{P}$ between two categories, a morphism $\sigma: S \rTo R$ of $\mathcal{A}$ is said to be of \emph{Galois descent} provided (i) $\sigma^*: \mathcal{A}/R \rTo  \mathcal{A}/S$ is monadic, (ii) the counit of the adjunction $\mathcal{D} \dashv \mathcal{C}$, sliced at $S$, is an isomorphism; and, (iii) for every object $X_{\phi}$ of $\mathcal{P}/\mathcal{D}S$, the unit of the adjunction $\mathcal{D} \dashv \mathcal{C}$, sliced at $S$, is an isomorphism at $\sigma^*\Sigma_{\sigma}\mathcal{C}_SX_{\phi}$. 
\end{definition}

Given $\sigma: S \rTo R$, an object $A_a$ of $\mathcal{A}/R$ is said to be \emph{$\sigma$-split} provided the unit of $\mathcal{D} \dashv \mathcal{C}$, sliced at $S$, is an isomorphism when evaluated at $\sigma^*A_a$. Condition (iii) of the definition of Galois descent is that $\Sigma_{\sigma}\mathcal{C}_S X_{\phi}$ is $\sigma$-split for every object $X_{\phi}$ of $\mathcal{P}/\mathcal{D}S$. The full subcategory of $\mathcal{A}/R$ consisting of $\sigma$-split objects is written $Split_R(\sigma)$.

Notice for any morphism $\sigma:S \rTo R$ of Galois descent that by exploiting the fact that $\mathcal{C}_S\mathcal{D}_S\mathcal{C}_S \cong \mathcal{C}_S$ (since the counit is an isomorphism at the slice $S$) we have that an object $A_a$ is in $Split_R(\sigma)$ if and only if $\sigma^*A_a\cong \mathcal{C}_S X_{\phi}$ for some object $X_{\phi}$ of $\mathcal{P}/\mathcal{D}S$ (take $X_{\phi}=\mathcal{D}_S \sigma^* A_a$ one way round). From this it is clear that $Split_R(\sigma)$ has finite limits and so is cartesian (both $\mathcal{C}_S$ and $\sigma^*$ preserve finite limits as they are right adjoints). 

We now state the Absolute Categorical Galois Theorem:
\begin{theorem}\label{Absolute Categorical Galois Theorem}
Let $\mathcal{D} \dashv \mathcal{C}: \mathcal{A} \pile{\rTo \\ \lTo} \mathcal{P}$ be an adjunction between two categories, both with pullbacks. If we are given a Galois descent morphism $\sigma:S \rTo R$ in $\mathcal{A}$ then there exists a groupoid $Gal[\sigma]$ in $\mathcal{P}$ such that
\begin{eqnarray*}
Split_R(\sigma) \simeq [ Gal[\sigma],\mathcal{P}] \text{.}
\end{eqnarray*} 
\end{theorem}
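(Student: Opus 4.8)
The plan is to deduce the Absolute theorem from the Trivial theorem (Proposition~\ref{Trivial}) in the reinterpreted form given at the end of Section~2, by first passing to the slice over $R$ and then cutting down to the $\sigma$-split objects. I would begin by slicing the adjunction at $R$, giving $\mathcal{D}_R \dashv \mathcal{C}_R : \mathcal{A}/R \to \mathcal{P}/\mathcal{D}R$ between cartesian categories, in which $\sigma$ becomes the object $S_\sigma$ and pullback along $!\colon S_\sigma \to 1$ is precisely $\sigma^*$. Applying the commuting-right-adjoints observation of Section~2 to $\sigma$ gives $\sigma^*\mathcal{C}_R \cong \mathcal{C}_S(\mathcal{D}\sigma)^*$, whose values lie in the essential image of $\mathcal{C}_S$; hence $\mathcal{C}_R$ lands in $Split_R(\sigma)$, and restricting $\mathcal{D}_R$ while corestricting $\mathcal{C}_R$ produces an adjunction $\mathcal{D}' \dashv \mathcal{C}' : Split_R(\sigma) \to \mathcal{P}/\mathcal{D}R$ between cartesian categories (recall that $Split_R(\sigma)$ is cartesian and its inclusion into $\mathcal{A}/R$ preserves finite limits). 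The object $S_\sigma$ lies in $Split_R(\sigma)$, since $S_\sigma = \Sigma_\sigma \mathcal{C}_S X_\phi$ for $X_\phi$ the terminal object, and it is this object to which I will ultimately apply Proposition~\ref{Trivial}.

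Next I would compute the slice $Split_R(\sigma)/S_\sigma$ using Lemma~\ref{technical}. Slicing $\mathcal{D}_R \dashv \mathcal{C}_R$ further at $S_\sigma$ recovers $\mathcal{D}_S \dashv \mathcal{C}_S$, so conditions (ii) and (iii) of Galois descent are exactly the two hypotheses of Lemma~\ref{technical} applied to the adjunction $\mathcal{D}_R \dashv \mathcal{C}_R$ with ambient category $\mathcal{A}/R$ and chosen object $W = S_\sigma$ (the lemma's $W^*\Sigma_W$ is here $\sigma^*\Sigma_\sigma$, and its $R_W$ is here $\mathcal{C}_S$). The lemma then yields, for $B_g$ in $\mathcal{A}/S$, that $\Sigma_\sigma B_g$ is $\sigma$-split if and only if $B_g$ lies in the essential image of $\mathcal{C}_S$. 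Since $Split_R(\sigma)/S_\sigma$ is exactly the full subcategory of $\mathcal{A}/S$ on those $B_g$ with $\Sigma_\sigma B_g$ $\sigma$-split, this identifies $Split_R(\sigma)/S_\sigma$ with the essential image of $\mathcal{C}_S$; as the counit is an isomorphism at $S$, $\mathcal{C}_S$ is fully faithful and $\mathcal{D}_S$ restricts to an equivalence of this image with $\mathcal{P}/\mathcal{D}S$. Consequently the sliced adjunction $\mathcal{D}'_{S_\sigma} \dashv \mathcal{C}'_{S_\sigma}$ is an equivalence, which supplies one of the two hypotheses of the Trivial theorem.

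The remaining hypothesis, that $!\colon S_\sigma \to 1$ be of effective descent \emph{within} $Split_R(\sigma)$, is where I expect the main obstacle, since effective descent is a property of the ambient slice $\mathcal{A}/R$ and need not automatically survive restriction to a full subcategory. My approach is to transport the monad. Condition (i) gives an equivalence $\mathcal{A}/R \simeq (\mathcal{A}/S)^{\mathbb{T}}$ for the monad $\mathbb{T} = \sigma^*\Sigma_\sigma$ on $\mathcal{A}/S$, under which $Split_R(\sigma)$ corresponds to the full subcategory of those $\mathbb{T}$-algebras whose underlying object lies in $\mathcal{B} := Split_R(\sigma)/S_\sigma$. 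Using the identification of $\mathcal{B}$ with the essential image of $\mathcal{C}_S$ from the previous step, one checks that $\mathbb{T}$ carries $\mathcal{B}$ into itself and so restricts to a monad $\mathbb{T}'$ on $\mathcal{B}$, whose algebras are precisely the $\mathbb{T}$-algebras with underlying object in $\mathcal{B}$. Restricting the comparison equivalence to these full subcategories then gives $Split_R(\sigma) \simeq \mathcal{B}^{\mathbb{T}'}$, with the restricted pullback functor playing the role of the forgetful functor; thus the restriction of $\sigma^*$ is monadic, i.e. $!\colon S_\sigma \to 1$ is of effective descent in $Split_R(\sigma)$.

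Finally I would invoke Proposition~\ref{Trivial} through the reinterpretation of Section~2, applied to $\mathcal{D}' \dashv \mathcal{C}' : Split_R(\sigma) \to \mathcal{P}/\mathcal{D}R$ with the object $S_\sigma$: this produces a groupoid $\mathbb{G}$ internal to $\mathcal{P}/\mathcal{D}R$ with $Split_R(\sigma) \simeq [\mathbb{G}, \mathcal{P}/\mathcal{D}R]$. Because pullbacks in $\mathcal{P}/\mathcal{D}R$ are created in $\mathcal{P}$, the groupoid $\mathbb{G}$ may be regarded as internal to $\mathcal{P}$ --- this is $Gal[\sigma]$ --- and its objects, being over the base of $\mathbb{G}$ (which lies over $\mathcal{D}R$), are automatically over $\mathcal{D}R$, so that $[\mathbb{G}, \mathcal{P}/\mathcal{D}R]$ and $[Gal[\sigma], \mathcal{P}]$ coincide. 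This gives $Split_R(\sigma) \simeq [Gal[\sigma], \mathcal{P}]$ as required.
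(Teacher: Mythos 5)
Your proof is correct, and it follows the paper's own strategy almost step for step: you build the restricted adjunction $Split_R(\sigma) \to \mathcal{P}/\mathcal{D}R$ from the commutation $\sigma^*\mathcal{C}_R \cong \mathcal{C}_S(\mathcal{D}\sigma)^*$ (your version is the type-correct form of the isomorphism the paper writes), you place $S_\sigma$ in $Split_R(\sigma)$ using condition (iii) at the terminal object, you apply Lemma \ref{technical} to $\mathcal{D}_R \dashv \mathcal{C}_R$ with $W = S_\sigma$ to identify $Split_R(\sigma)/S_\sigma$ with the full subcategory of $\mathcal{A}/S$ on which the unit is invertible (equivalently, the essential image of $\mathcal{C}_S$), and you close with Proposition \ref{Trivial} through the Section 2 reinterpretation, exactly as the paper does. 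The one place you genuinely depart from the paper is the effective-descent step, its step (d). The paper verifies Beck's monadicity conditions by hand: $(S_\sigma)^*$ reflects isomorphisms because $\sigma^*$ does; a $(S_\sigma)^*$-split pair is $\sigma^*$-split and so, by condition (i), has a coequalizer $Q_q$ in $\mathcal{A}/R$ computed as a split coequalizer after applying $\sigma^*$; and $Q_q$ lands back in $Split_R(\sigma)$ because that split coequalizer diagram lies in $Split_R(\sigma)/S_\sigma$, where by (c) all units are isomorphisms. You instead transport the monad: $\mathbb{T} = \sigma^*\Sigma_\sigma$ carries $\mathcal{B} = Split_R(\sigma)/S_\sigma$ into itself (this invariance is exactly condition (iii) read through your step on the slice), so it restricts to a monad $\mathbb{T}'$ on $\mathcal{B}$, and the comparison equivalence of condition (i) restricts to $Split_R(\sigma) \simeq \mathcal{B}^{\mathbb{T}'}$. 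Both arguments consume the same hypotheses; the paper's needs the slightly delicate point that a split (hence absolute) coequalizer of objects with invertible unit again has invertible unit, while yours needs the abstract observation that for a full, isomorphism-closed subcategory invariant under a monad, the algebras of the restricted monad are precisely those algebras whose carrier lies in the subcategory, together with the remark --- which you should make explicit --- that the restriction of the comparison functor is the comparison functor of the restricted adjunction $\Sigma_\sigma \dashv (S_\sigma)^*$, since monadicity of $(S_\sigma)^*$ is a statement about that specific adjunction. With that sentence added, your argument is a clean and arguably tidier substitute for the paper's step (d); everything else coincides with the published proof.
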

\begin{proof}
We complete this proof by (a) showing that there is a right adjoint to $\mathcal{L}: Split_R(\sigma) \hookrightarrow \mathcal{A}/R \rTo^{\mathcal{D}_R} \mathcal{P}/\mathcal{D}R$, (b) showing that $S_{\sigma}$ is a member of $Split_R(\sigma)$, (c) verifying that ${\mathcal{L}}_{S_{\sigma}}:Split_R(\sigma)/S_{\sigma} \rTo \mathcal{P}/\mathcal{D}S$ is an equivalence; and, (d) showing that $!: S_{\sigma} \rTo 1$ is an effective descent morphism. Once (a), (b), (c) and (d) are established then since $\mathcal{P}/\mathcal{D}R$ is cartesian we can conclude by Proposition \ref{Trivial} that $Split_R(\sigma) \simeq [ \mathbb{G}, \mathcal{P}/\mathcal{D}R]$ which, by earlier comments, we know to be isomorphic to $[Gal[\sigma],\mathcal{P}]$ for some groupoid $Gal[\sigma]$ internal to $\mathcal{P}$.  

(a) The composition $\mathcal{P}/\mathcal{D}R \rTo^{\mathcal{C}_R}\mathcal{A}/R$ must factor through $Split_R(\sigma)$ because $\sigma^*\mathcal{C}_R \cong (\mathcal{D}\sigma)^* \mathcal{C}_S$ (recall our comment about sliced adjunctions interacting well with pullback adjunctions). Therefore we define $\mathcal{R}: \mathcal{P}/\mathcal{D}R \rTo Split_R(\sigma)$ to be the factorization of $\mathcal{C}_R$ through $Split_R(\sigma)$. Clearly $\mathcal{L} \dashv \mathcal{R}$.   

(b) Certainly $S_{\sigma}$ is a member of $Split_R(\sigma)$. This is because $S_{\sigma}=\Sigma_{\sigma}1=\Sigma_{\sigma}\mathcal{C}_S1$, as $\mathcal{C}_S$ preserves the terminal object, and we know that $\Sigma_{\sigma}\mathcal{C}_S 1$ is $\sigma$-split by definition of Galois descent. (Of course, $\mathcal{C}/X$ has a terminal object for any $X$, even if $\mathcal{C}$ doesn't; it is given by $X_{Id_X}$.)

(c)  First note that we can identify $Split_R(\sigma)/S_{\sigma}$ with the full category of $\mathcal{A}/S$ consisting of $B_b$ such that $\sigma b$ is $\sigma$-split (i.e. such that $\Sigma_{\sigma}(B_b) \in Ob(Split_R(\sigma)$). Now apply Lemma \ref{technical} to the adjunction $\mathcal{D}_R \dashv \mathcal{C}_R: \mathcal{A}/R \pile{\rTo \\ \lTo} \mathcal{P}/\mathcal{D}R$, with $W=S_{\sigma}$, to see that $Split_R(\sigma)/S_{\sigma}$ can be identified with all objects $B_b$ of $\mathcal{A}/S$ such that$\eta^S_{B_b}$ is an isomorphism. By combining this with the assumption that the counit, sliced at $S$, is an isomorphism it is clear that $Split_R(\sigma)/S_{\sigma} \simeq \mathcal{P}/\mathcal{L}(S_{\sigma})$.

(d) Proving that $(S_{\sigma})^*: Split_R(\sigma) \rTo Split_R(\sigma)/S_{\sigma}$ is monadic is relatively straightforward given the assumption that $\sigma$ is an effective descent morphism. Certainly $(R_{\sigma})^*$ reflects isomorphism as $\sigma^*$ does (the category $Split_R(\sigma)/S_{\sigma}$ is effectively a subcategory of $\mathcal{A}/S$). Given a parallel pair $f,g:A_a \pile{\rTo \\ \rTo} B_b$ that is $(S_{\sigma})^*$-split (in the sense of Beck's monadicity theorem) then it is certainly $\sigma^*$-split. As $\sigma^*$ is monadic we therefore know that there is a coequalizer of $f,g$, say $B_b \rTo^n Q_q$, such that $\sigma^*(Q_q)$ is isomorphic to the split coequalizer of $\sigma^*f, \sigma^*g$. But this split coequalizer diagram is in $Split_R(\sigma)/S_{\sigma}$ and from (c) we have clarified that the unit of $\mathcal{D}_S \dashv \mathcal{C}_S$ at any object in this category is an isomorphism, and so $Q_q$ is in $Split_R(\sigma)$.     
\end{proof} 

Proving in the other direction that the Trivial Categorical Galois Theorem follows from the Absolute Categorical Galois Theorem is simple enough. Say we are given $L \dashv R: \mathcal{D} \pile{ \rTo \\ \lTo} \mathcal{C}$ and an object $W$ of $\mathcal{D}$ with the property that $!: W \rTo 1$ is an effective descent morphism and $L_W: \mathcal{D}/W \rTo \mathcal{C}/LW$ is an equivalence. Then the unit and counit of $L_W \dashv R_W$ are isomorphisms at every object of $\mathcal{D}/W$ and $\mathcal{C}/LW$ respectively. Therefore every object of $\mathcal{D}$ is $W$-split, from which it is clear that $!:W \rTo 1$ is of Galois descent and $Split_1(W)$ is $\mathcal{D}$.

\section{Concluding comments}
The main conclusion of this paper is that we can prove the Joyal and Tierney result by applying a trivial case of the categorical Galois theory. Further we have shown that a technical categorical lemma allows us to prove non-relative categorical Galois theorem via the trivial case. In fact the trivial case characterises connected components adjunctions quite generally and indeed further characterisations exist which seem very suitable to understanding principal bundles in a general categorical context (see Proposition 7.4 of \cite{towHS}), giving further depth to our `trivial' case. 

This leaves open the question of whether relative categorical Galois theory can be similarly viewed. Indeed if we need to see the Joyal and Tierney result as being about toposes this suggests the need for a relative categorical Galois theorem. To clarify: given a bounded geometric morphism $p: \mathcal{E} \rTo \mathcal{S}$ the trivial categorical Galois theorem  shows us that the category of locales over $\mathcal{E}$ is equivalent to $[\mathbb{G},\mathbf{Loc}_{\mathcal{S}}]$ for some localic groupoid $\mathbb{G}$, but the actual Joyal and Tierney result requires us to restrict to discrete locales over $\mathcal{E}$. This is achieved by restricting the equivalence to local homeomorphisms and so on the surface this looks like this could follow by Theorem 5.1.24 of \cite{GaloisTheories} with admissible arrows taken to be local homeomorphisms. However a direct application is not possible as an open surjection (the relevant descent morphism) is not necessarily a local homeomorphism. A relative categorical Galois theorem can be formulated by extracting the categorical essence of the proof of Theorem 2.4, given right at the end of \cite{towgroth} and this then applied to obtain the Joyal and Tierney result as a fact about toposes. But this cannot be used to prove Theorem 5.1.24 of \cite{GaloisTheories} as the example of the abelianisation adjunction (Proposition 5.2.10 of \cite{GaloisTheories}) is not covered (essentially because the sliced counit is only an isomorphism once we restrict to surjections, which are the admissible arrows in this case).  

The topological case even suggests that a relative Galois categorical theory could give rise to two different groupoids based on the same functor. For any locally connected space $X$ there is an adjunction between the category of sheaves over $X$ and $\mathbf{Set}$, with the fundamental group coming from the Galois theory of this adjunction (provided certain separation axioms hold). However this is in contrast to the localic groupoid that arises from Joyal and Tierney which though trivial in this case (it is $Id,Id: X \pile{\rTo \\ \rTo }X $) nonetheless can be seen as coming from a Galois Theorem using the approach outlined in this paper. The right adjoint used to construct the fundamental group is a restriction to local homeomorphisms of the right adjoint used to construct the trivial localic groupoid $X \pile{ \rTo \\ \rTo} X$, so there is some sort of commonality and some relative-ness in this case, though I am not offering any non-trivial examples.  

Further purely categorical work could potentially make these relationships clearer.

\section{Acknowledgement}
The author would like to thank the anonymous referee of an earlier draft of this paper who disabused him of the idea that the abelianisation adjunction between groups and abelian groups is stably Frobenius. It is not!

\end{document}